\newcommand{\divv}{\text{div}}
\newcommand{\ord}{\text{ord}}
\theoremstyle{plain}
\newtheorem{lemma}{Lemma}
\newtheorem{proposition}{Proposition}
\theoremstyle{definition}
\newtheorem*{example}{Example}
\newtheorem*{remark}{Remark}
\theoremstyle{remark}
\numberwithin{equation}{section}
\newcommand{\nocontentsline}[3]{}
\newcommand{\tocless}[2]{\bgroup\let\addcontentsline=\nocontentsline#1{#2}\egroup}
\begin{document}
\title[Beyond two criteria for supersingularity]{Beyond two criteria for supersingularity: \\ coefficients of division polynomials}
\author{Christophe~Debry}
\address{Mathematics Department, KU Leuven, Celestijnenlaan 200B, 3001 Leuven, Belgium
\emph{and} Mathematisch Instituut, Leiden University.}
\email{christophe.debry@wis.kuleuven.be}
\thanks{The author is supported by a PhD fellowship of the Research Foundation -- Flanders (FWO)}
\keywords{Elliptic curves, division polynomials, supersingularity, coefficients}
\begin{abstract}
Let $E: y^2 = x^3+Ax+B$ be an elliptic curve defined over a finite field of characteristic $p\geq 3$. In this paper we prove that the coefficient at $x^{\frac{1}{2}p(p-1)}$ in the $p$--th division polynomial $\psi_p(x)$ of $E$ equals the coefficient at $x^{p-1}$ in $(x^3+Ax+B)^{\frac{1}{2}(p-1)}$. The first coefficient is zero if and only if the division polynomial has no roots, which is equivalent to $E$ being supersingular. Deuring (1941) proved that this supersingularity is also equivalent to the vanishing of the second coefficient. So the zero loci of the coefficients (as functions of $A$ and $B$) are equal; the main result in this paper is clearly stronger than this last statement.
\end{abstract}
\maketitle


\section*{Introduction}

Let $\mathbb{F}_{p^k}$ be a finite field of characteristic $p\geq 3$ and let $E/\mathbb{F}_{p^k}$ be an elliptic curve given by a short Weierstrass equation $E:y^2 = x^3+Ax+B$. Associated to $E$, one defines division polynomials $\psi_m$ (for every positive integer $m$), whose properties we shall review in Section 1. These polynomials can be used to check whether $E$ is supersingular or not:

\medskip
\medskip

\begin{center}\begin{minipage}{10.5cm} \textsc{Division polynomial criterion} \\ $E$ is supersingular if and only if the coefficient at $x^{\frac{1}{2}p(p-1)}$ in $\psi_p$ is zero. \end{minipage}\end{center}

\medskip
\medskip

\noindent For example, let $E: y^2 = x^3+Ax+B$ be an elliptic curve over $\mathbb{F}_{5^k}$. Then $\psi_5$ is equal to $2Ax^{10} + 4A^2Bx^5 + \left(4B^4-2A^3B^2+A^6\right)$. So $E$ is supersingular if and only if $A = 0$. There is also a classical criterion, very similar (in wording) to the one above.


\medskip
\medskip

\begin{center}\begin{minipage}{10.5cm} \textsc{Deuring criterion} \\ Let $E:y^2 = f(x)$ be an elliptic curve over $\mathbb{F}_{p^k}$, where $f(x)\in \mathbb{F}_{p^k}[x]$ is a cubic polynomial with distinct roots in $\overline{\mathbb{F}}_{p^k}$. Then $E$ is supersingular if and only if the coefficient of $x^{p-1}$ in $f(x)^{(p-1)/2}$ is zero.\end{minipage}\end{center}

\medskip
\medskip

\noindent For a proof of this criterion, one can consult Silverman \cite[V.4.1]{Silv}. We reconsider the above example: an elliptic curve $E:y^2 = x^3+Ax+B$ over $\mathbb{F}_{5^k}$ is supersingular if and only if the coefficient at $x^4$ in $(x^3+Ax+B)^2$ is zero, i.e., if and only if $2A = 0$. This is indeed the same criterion as the one we got using division polynomials. The striking similarity between the criteria actually has a deeper reason: not only do these coefficients at different monomials in different polynomials have the same zeros, they actually are equal, as we prove in Section 2. More precisely, we prove the following theorem:

\newpage

\noindent \textbf{Theorem.} \emph{Consider the elliptic curve $E:y^2 = x^3+Ax+B$ over $\mathbb{Q}(A,B)$ (where $A$ and $B$ are transcendentals). Let $p\geq 3$ be prime and let $\ell_p(A,B)$ be the coefficient at $x^{\frac{1}{2}p(p-1)}$ in the $p$--th division polynomial of $E$. Let $c_p(A,B)$ be the coefficient at $x^{p-1}$ in $(x^3+Ax+B)^{\frac{1}{2}(p-1)}$. Then $\ell_p(A,B) \equiv c_p(A,B)\pmod{p}$.}

\section{Division polynomials}


Let $\mathbb{F}_{p^k}$ be a finite field of characteristic $p\geq 3$, with $p^k$ elements. Let $E/\mathbb{F}_{p^k}$ be an elliptic curve with Weierstrass model $$E:y^2+a_1xy+a_3y = x^3+a_2x^2+a_4x+a_6.$$We denote the neutral element of the group law on $E$ by $\mathcal{O}$, and denote the multiplication--by--$m$ isogeny by $[m]$. The division polynomials $(\psi_m)_{m\geq 1}$ associated to $E$ are defined by recursion: $$\begin{aligned} \psi_1 & = 1 \quad \quad \psi_2 = 2y+a_1x+a_3 \quad\quad \psi_3 = 3x^4+b_2x^3+3b_4x^2+3b_6x+b_8, \\ \psi_4 & = \psi_2\cdot\left(2x^6+b_2x^5+5b_4x^4+10b_6x^3+10b_8x^2+(b_2b_8-b_4b_6)x+(b_4b_8-b_6^2)\right),\end{aligned}$$and
$$\begin{aligned} \psi_{2m+1} & = \psi_{m+2}\psi_m^3-\psi_{m-1}\psi_{m+1}^3, \label{eq:odd} \\ \psi_2\psi_{2m} & = \psi_{m-1}^2\psi_m\psi_{m+2}-\psi_{m-2}\psi_m\psi_{m+1}^2. \label{eq:even}\end{aligned}$$Recall that the $b$--quantities used in $\psi_3$ and $\psi_4$ are polynomials in the $a$--quantities: $b_2 = a_1^2+4a_2$, $b_4 = 2a_4+a_1a_3$, $b_6 = a_3^2+4a_6$ and $b_8 = a_1^2a_6+4a_2a_6-a_1a_3a_4+a_2a_3^2-a_4^2$. Every $\psi_m\in\mathbb{F}_{p^k}[x,y]$ can be written as a linear polynomial in $y$ over $\mathbb{F}_{p^k}[x]$ using the Weierstrass equation. As such, one can prove that if $m$ is odd, then $\psi_m\in \mathbb{F}_{p^k}[x]$, and as a polynomial in $x$, $\psi_m$ has degree at most $\frac{1}{2}(m^2-1)$ and the coefficient at $x^{\frac{1}{2}(m^2-1)}$ is equal to $m$. In particular, since we assume $p$ to be an odd prime, the polynomial $\psi_p\in\mathbb{F}_{p^k}[x]$ has degree strictly smaller than $\frac{1}{2}(p^2-1)$. The proofs of these claims can be found in various places, e.g., \cite[3.6]{Enge}. We will also need the following standard facts:
\begin{itemize}
\item The roots of $\psi_m$ are precisely the nontrivial $p$--torsion points on $E$, i.e., the points $P\in E(\overline{\mathbb{F}}_{p^k})\setminus\{\mathcal{O}\}$ satisfying $[p]P = \mathcal{O}$.
\item The polynomials $\psi_m^2$ and $\phi_m = x\psi_m^2 - \psi_{m-1}\psi_{m+1}$ can be considered as elements of $\mathbb{F}_{p^k}[x]$ using the Weierstrass equation, and as such are relatively prime.
\item Denoting the Weierstrass $x$--coordinate function on $E$ by $x$, the functions $x\circ[m]$ and $\phi_m/\psi_m^2$ on $E$ are equal.
\end{itemize}

\noindent We can deduce the following crucial result about the $p$--th division polynomial in characteristic $p\geq 3$.

\begin{proposition}
Let $E/\mathbb{F}_{p^k}$ be an ordinary elliptic curve ($p\geq 3$ prime). Then $\psi_p$ has degree $\frac{1}{2}p(p-1)$ and lies in $\mathbb{F}_{p^k}[x^p]$. \label{prop:xp}
\end{proposition}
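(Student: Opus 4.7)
The plan is to exploit the hypothesis that $E$ is ordinary through the degree decomposition of the multiplication-by-$p$ isogeny $[p]\colon E\to E$: in the ordinary case both the separable and the inseparable degree of $[p]$ are equal to $p$. Combined with the identity $x\circ[p]=\phi_p/\psi_p^2$ and the coprimality of $\phi_p$ and $\psi_p^2$ (both recalled in the excerpt), this will force $\psi_p^2\in\mathbb{F}_{p^k}[x^p]$, from which both assertions of the proposition follow.

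First I compute the total degree of $\phi_p/\psi_p^2$ as a rational function on $\mathbb{P}^1_x$. Since $\deg[p]=p^2$ and $\deg x=2$, the composite $x\circ[p]\colon E\to\mathbb{P}^1$ has degree $2p^2$, and factoring it as $E\xrightarrow{x}\mathbb{P}^1\xrightarrow{\phi_p/\psi_p^2}\mathbb{P}^1$ shows that $\phi_p/\psi_p^2$ has degree $p^2$, i.e.\ $\max(\deg\phi_p,\deg\psi_p^2)=p^2$. The same factorisation applied to separable degrees uses ordinariness: the separable degree of $[p]$ is $p$, so $x\circ[p]$ has separable degree $2p$; dividing out the separable map $x$ of degree $2$, the rational function $\phi_p/\psi_p^2$ has separable degree $p$ and hence inseparable degree $p$. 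Since a rational function in $\mathbb{F}_{p^k}(x)$ has inseparable degree $p^e$ exactly when it lies in $\mathbb{F}_{p^k}(x^{p^e})\setminus\mathbb{F}_{p^k}(x^{p^{e+1}})$, I can write $\phi_p/\psi_p^2=U(x^p)/V(x^p)$ with $U,V\in\mathbb{F}_{p^k}[T]$ coprime. Using the coprimality of $\phi_p$ and $\psi_p^2$ in $\mathbb{F}_{p^k}[x]$, this forces $\psi_p^2=c\,V(x^p)$ for some nonzero constant $c$, so $\psi_p^2\in\mathbb{F}_{p^k}[x^p]$; differentiating and using that the characteristic is odd then yields $\psi_p'=0$ and hence $\psi_p\in\mathbb{F}_{p^k}[x^p]$.

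For the degree claim, the nontrivial $p$-torsion points contribute exactly $(p-1)/2$ distinct $x$-coordinates (for $p$ odd, no nontrivial $p$-torsion point can be $2$-torsion, so $P$ and $-P$ are distinct but share an $x$-coordinate), and these are precisely the roots of $\psi_p$. Since $\psi_p^2\in\mathbb{F}_{p^k}[x^p]$ and is a perfect square, each such root occurs in $\psi_p^2$ with multiplicity divisible by $2p$, so $\deg\psi_p^2\geq p(p-1)$. Combined with the upper bound $\deg\psi_p^2<p^2-1$ recalled in the excerpt and the divisibility $p\mid\deg\psi_p^2$, the only possibility is $\deg\psi_p^2=p(p-1)$, so $\deg\psi_p=\tfrac{1}{2}p(p-1)$.

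The main obstacle I anticipate is the careful execution of the middle step: first, the clean identification of the inseparable degree of $\phi_p/\psi_p^2$ with the largest $p^e$ for which this function lies in $\mathbb{F}_{p^k}(x^{p^e})$, and then the coprimality bookkeeping that lifts the rational-function statement $\phi_p/\psi_p^2\in\mathbb{F}_{p^k}(x^p)$ to the polynomial statement $\psi_p^2\in\mathbb{F}_{p^k}[x^p]$. Once that is secured, everything else is routine degree arithmetic together with the standard degree decomposition of $[p]$ on an ordinary elliptic curve.
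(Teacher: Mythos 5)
Your argument is correct, and it reaches both conclusions by a genuinely different route from the paper's. Both proofs pivot on the identity $x\circ[p]=\phi_p/\psi_p^2$ with coprime numerator and denominator, but the paper works with divisors: it observes that $x\circ[p]$ is a $p$--th power of a rational function (inseparability of $[p]$ plus perfectness of $\mathbb{F}_{p^k}$), deduces that every coefficient of $\divv(\psi_p)$ is divisible by $p$, and then squeezes the divisor completely using the point count $\sharp\mathcal{Z}=p-1$ and the degree bound; this yields the degree at once, and the statement $\psi_p\in\mathbb{F}_{p^k}[x^p]$ then requires the extra check that $\frac{1}{p}\divv(\psi_p)$ is principal, i.e.\ that the nontrivial $p$--torsion points sum to $\mathcal{O}$. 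You instead compare separable and inseparable degrees along the two factorisations of $x\circ[p]$, conclude $\phi_p/\psi_p^2\in\mathbb{F}_{p^k}(x^p)$, and descend to $\psi_p^2\in\mathbb{F}_{p^k}[x^p]$ by coprimality and then to $\psi_p$ by differentiating; the degree then comes from a separate, more elementary count of the $(p-1)/2$ distinct roots, each of multiplicity divisible by $p$, squeezed against the upper bound $\deg\psi_p<\frac{1}{2}(p^2-1)$. Your route buys you freedom from the divisor--class verification (no need to know that the $p$--torsion points sum to $\mathcal{O}$), at the cost of invoking the multiplicativity of separable/inseparable degrees in towers and the identification of the inseparable degree of a map $\mathbb{P}^1\to\mathbb{P}^1$ with the largest $p^e$ for which the function lies in $\mathbb{F}_{p^k}(x^{p^e})$; both are standard (the latter is the Frobenius factorisation of \cite[II.2.12]{Silv} applied to $\mathbb{P}^1$), and you rightly flag that step as the one to write out in full. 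Note also that for the containment $\psi_p^2\in\mathbb{F}_{p^k}[x^p]$ you only need the inseparable degree of $\phi_p/\psi_p^2$ to be at least $p$, so the full ordinary decomposition $\deg_s[p]=\deg_i[p]=p$ is used only where the paper also uses ordinariness, namely in counting torsion points for the degree claim.
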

\begin{proof}
Note that $[p]$ is not separable and hence factors through the $p$--th power Frobenius $$\Phi:E\to E^{(p)}:[X:Y:Z]\mapsto [X^p:Y^p:Z^p],$$ where $E^{(p)}$ is the elliptic curve defined by the Weierstrass equation with coefficients $a_i^p$. (Cf. \cite[II.2.12]{Silv}) It follows that $x\circ [p]$ is a rational function of $x^p$ and $y^p$. Since finite fields are perfect, this implies that $x\circ [p]$ is the $p$--th power of a rational function in $x$ and $y$. So the coefficients of the divisor of $x\circ [p]$ are all divisible by $p$. Since $x\circ [p] = \phi_p/\psi_p^2$ where $\phi_p$ and $\psi_p^2$ are coprime, we find that the coefficients in $2\divv(\psi_p)$ are $p$--divisible. The zero set $\mathcal{Z}$ of $\psi_p$ is equal to $(\text{ker}[p])(\overline{\mathbb{F}}_{p^k})\setminus\{\mathcal{O}\}$, and $\psi_p$ has only a pole at $\mathcal{O}$, so $$\divv(\psi_p) = \sum_{P\in \mathcal{Z}} n_P\langle P\rangle - n\langle \mathcal{O}\rangle,$$where $n = \sum_{P\in \mathcal{Z}} n_P$ and each $n_P \geq 1$. By the $p$--divisibility of the coefficients, we get that $p$ divides each $2n_P$ and therefore divides each $n_P$ ($p$ is odd). It follows that $n_P\geq p$ and $n \geq p\cdot\sharp\mathcal{Z} = p(p-1)$ because $E$ is ordinary. The polynomial $\psi_p\in\mathbb{F}_{p^k}[x]$ has degree $\leq \frac{1}{2}(p^2-1)$ and hence has order at least $1-p^2$ in $\mathcal{O}$. In other words, $-n\geq 1-p^2$, which together with $p\mid n$ implies that $n\leq p(p-1)$. We find that $n = p(p-1)$ and hence $$\divv(\psi_p) = \sum_{P\in \mathcal{Z}} p\langle P\rangle - p(p-1)\langle \mathcal{O}\rangle = p\left(\sum_{P\in \mathcal{Z}} \langle P\rangle - (p-1)\langle \mathcal{O}\rangle\right).$$The first implication is that the degree of $\psi_p\in\mathbb{F}_{p^k}[x]$ is equal to $-\frac{1}{2}\ord_{\mathcal{O}}(\psi_p) = \frac{1}{2}p(p-1)$. One also easily verifies that the sum of the points in $\mathcal{Z}$ is equal to $\mathcal{O}$, so the divisor $\frac{1}{p}\divv(\psi_p)$ is principal. Therefore, $\psi_p$ is the $p^{\text{th}}$ power of a polynomial in $\mathbb{F}_{p^k}[x]$, which (working in characteristic $p$) implies that $\psi_p\in\mathbb{F}_{p^k}[x^p]$.
\end{proof}

\begin{remark} An alternative to prove this proposition is to use the main theorem from \cite{Cass}. Cheon and Hahn \cite{ChHa} prove the proposition for ordinary elliptic curves over the prime field $\mathbb{F}_p$.\end{remark}

\begin{example} Let $E: y^2 = x^3+Ax+B$ be an elliptic curve over $\mathbb{F}_{5^k}$. Then $\psi_5$ is equal to $2Ax^{10} + 4A^2Bx^5 + \left(4B^4-2A^3B^2+A^6\right)$. Note that $\psi_5$ is indeed a function of $x^5$. It also follows from the proposition that if $E$ is ordinary, then $\psi_5$ must have degree $5\cdot 4/2 = 10$, so $A \neq 0$ if $E$ is ordinary. \end{example}

We can now derive the division polynomial criterion for supersingularity. Let $E/\mathbb{F}_{p^k}$ be an elliptic curve. Since the zeros of $\psi_p$ are precisely the nontrivial $p$--torsion points, $E$ is supersingular if and only if $\psi_p$ has no zeros, i.e., $\psi_p$ is a constant polynomial. This is equivalent to all nonconstant coefficients of $\psi_p$ being zero and this means we have $O(p^2)$ equations to be satisfied. (Indeed, $p$ is odd, so $\psi_p$ can be written as a polynomial in $x$ of degree at most $\frac{1}{2}(p^2-1)$.) But we know that if $E$ is ordinary, then $\psi_p$ has degree $\frac{1}{2}p(p-1)$. This implies that $E$ is supersingular if and only if the coefficient at $x^{\frac{1}{2}p(p-1)}$ in $\psi_p$ is zero, which is the division polynomial criterion mentioned in the introduction.

\begin{example} Reconsider the previous example. Then $\psi_5$ is constant if and only if $2A = 4A^2B = 0$, which indeed is equivalent to $2A = 0$. In other words: $E$ is supersingular if and only if $A = 0$. Note that we went from $12 = (5^2-1)/2$ equations (in characteristic zero, or when we want to work over $\mathbb{Z}[A,B,x,y]$, we need all the nonconstant coefficients to be zero) to $(5-1)/2 = 2$ equations (because $\psi_5$ turned out to be a function of $x^5$), to just one equation. 
\end{example}

\section{Proof of the Theorem}

Let us first fix some notation. Let $A$ and $B$ be indeterminates and consider the sequence of polynomials in $\mathbb{Z}[x,y,A,B]$ defined by $$\begin{aligned} \psi_0 & = 0 \\ \psi_1 & = 1 \\ \psi_2 & = 2y \\ \psi_3 & = 3x^4+6Ax^2+12Bx-A^2, \\ \psi_4 & = 2y\left(2x^6+10Ax^4+40Bx^3-10A^2x^2-8ABx-2(A^3+8B^2)\right),\end{aligned}$$the relation $y^2 = x^3+Ax+B$, and the recursion formulas $$\begin{aligned} \psi_{2m+1} & = \psi_{m+2}\psi_m^3-\psi_{m-1}\psi_{m+1}^3, \\ 2y\psi_{2m} & = \psi_{m-1}^2\psi_m\psi_{m+2}-\psi_{m-2}\psi_m\psi_{m+1}^2.\end{aligned}$$
One can easily prove that $\psi_m\in \mathbb{Z}[x,A,B]$ if $m$ is odd, so we write $\psi_p(x,A,B)$ to denote the $p$--th polynomial in this sequence. Now define $\ell_p(A,B)$ to be the coefficient at $x^{\frac{1}{2}p(p-1)}$ in $\psi_p(x,A,B)\in\mathbb{Z}[x,A,B]$. Define $c_p(A,B)$ as the coefficient at $x^{p-1}$ in $(x^3+Ax+B)^{\frac{1}{2}(p-1)}$. For example, $\ell_5(A,B) = 62A$ (we are not yet reducing mod 5) and $c_p(A,B) = 2A$.

\medskip
\medskip

\noindent \textbf{Theorem.} \emph{
Let $p\geq 3$ be a prime number. Then $c_p(A,B) \equiv \ell_p(A,B)\pmod{p}$. \label{prop:equal}
}

\medskip
\medskip

The remainder of this section consists of the proof of the theorem. To simplify notations, write $p = 2q+1$ with $q\in\mathbb{Z}$. One can easily check the theorem for $p = 3$: both coefficients are zero. So suppose $p\geq 5$ from now on.

\subsection{Step 1: $c_p(A,B)$ as a sum}

First, we compute $c_p(A,B)$ by using Newton's trinomial identity: $$(x^3+Ax+B)^q = \sum_{(i,j,k)\in S} \binom{q}{i,j,k} x^{3i+j}A^jB^k,$$where $S = \left\{(i,j,k)\in\mathbb{Z}^3\mid i,j,k\geq 0, i+j+k = q\right\}$ and $$\binom{q}{i,j,k} = \frac{q!}{i!j!k!}.$$Hence, $$c_p(A,B) = \sum_{(i,j,k)\in S_0} \binom{q}{i,j,k}A^jB^k,$$where $S_0 = \{(i,j,k)\in S\mid 3i+j = p-1 = 2q\}$. Let us determine $S_0$ more explicitly. The triple $(i,j,k)$ is in $S_0$ if and only if $i = \frac{1}{3}(2q-j)$, $k = q-i-j = \frac{1}{3}(q - 2j)$, and $i,j,k$ are non--negative integers. So $$S_0 = \left\{\left(\frac{1}{3}(2q-j),j,\frac{1}{3}(q-2j)\right)\mid j\equiv -q\pmod{3}, j\in\mathbb{Z}\cap \left[0,\frac{q}{2}\right]\right\}.$$ We find that $$c_p(A,B) = \sum_{j\in J} \binom{q}{\frac{1}{3}(2q-j),j,\frac{1}{3}(q-2j)}A^jB^{\frac{1}{3}(q-2j)},$$where $J = \left\{j\in\mathbb{Z}\mid j\equiv -q\pmod{3}, 0 \leq j \leq \frac{1}{2}q\right\}$.

\subsection{Step 2: $\ell_p(A,B)$ as a sum}

Write $\psi_p(x,A,B) = \sum_t \beta_t(A,B)x^t$, with $\beta_t(A,B)\in\mathbb{Z}[A,B]$. Note that if we give $x$ degree 1, $A$ degree 2 and $B$ degree 3, then $y^2 = x^3+Ax+B$ is homogeneous of degree 3, so giving $y$ degree $\frac{3}{2}$ is well--defined. Also, one can now prove by induction that $\psi_m(x,y,A,B)$ is homogeneous of degree $\frac{1}{2}(m^2-1)$. It follows that $\beta_t(A,B)$ is a homogeneous polynomial of (weighted) degree $\frac{1}{2}(p^2-1)-t$, and hence, it contains only monomials of the form $A^rB^s$ with $2r+3s = \frac{1}{2}(p^2-1)-t$. Hence write $$\beta_t(A,B) = \sum_{2r+3s = \frac{1}{2}(p^2-1)-t} \alpha_{r,s}A^rB^s,$$with $\alpha_{r,s}\in\mathbb{Z}$. We know that $\psi_p$ has leading coefficient $p$ (as a polynomial in $x$), so $\beta_{\frac{p^2-1}{2}} = p$ and hence $\alpha_{0,0} = p$. Also, $\alpha_{r,s} = 0$ if $r<0$ or $s<0$. The following result tells us how, for $t$ close to $\frac{1}{2}(p^2-1)$, the coefficients in $\beta_t$ look like (modulo $p^2$).

\begin{lemma}
For $0 < 2r+3s < q$ we have $$\alpha_{r,s} \in -\frac{(d-1)\left(d-\frac{3}{2}\right)}{d\left(d+\frac{1}{2}\right)}\alpha_{r-1,s} - \frac{\left(d-\frac{3}{2}\right)\left(d-\frac{5}{2}\right)}{d\left(d+\frac{1}{2}\right)}\alpha_{r,s-1} + p^2\mathbb{Z}_{p\mathbb{Z}},$$where $\mathbb{Z}_{p\mathbb{Z}}$ is the localization of $\mathbb{Z}$ by $\mathbb{Z}\setminus p\mathbb{Z}$ (invert everything that is not divisible by $p$).\label{lemma:recursie}
\end{lemma}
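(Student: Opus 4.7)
\noindent\emph{Step 1 (divisibility).} Apply Proposition~\ref{prop:xp} to the generic ordinary curve $y^2=x^3+Ax+B$ over $\mathbb{F}_p(A,B)$: the reduction $\overline{\psi_p}$ lies in $\mathbb{F}_p(A,B)[x^p]$ and has $x$-degree exactly $p(p-1)/2$. Translating via $t=(p^2-1)/2-2r-3s$, every monomial $A^rB^sx^t$ with $0<2r+3s<q$ sits strictly above this degree, so $p\mid\alpha_{r,s}$ on that range. Consequently $\psi_p',\psi_p''\in p\,\mathbb{Z}[A,B,x]$, and we may write $\psi_p'=pQ_1$, $\psi_p''=pQ_1'$ with $Q_1\in\mathbb{Z}[A,B,x]$.

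\medskip

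\noindent\emph{Step 2 (the key identity).} The engine of the proof should be the algebraic relation
\[ 2f(x)\,\psi_p''(x)+f'(x)\,\psi_p'(x)\;\equiv\;0\pmod{p^2}, \]
at least on the coefficients of the monomials $A^rB^sx^{t+1}$ with $0<d<q$ (where $d=2r+3s$). Multiplying through by $\psi_p'=pQ_1$ rewrites the left-hand side as $\partial_x\!\bigl(f(\psi_p')^2\bigr)=p^2\,\partial_x(fQ_1^2)$, which is manifestly $p^2$-divisible; after cancelling off $\psi_p'$ one gets $2f\psi_p''+f'\psi_p'=p(f'Q_1+2fQ_1')$, so the claim reduces to $f'Q_1+2fQ_1'\equiv 0\pmod p$ at the indicated monomials. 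I would attack this via the ``$p$-th power'' structure $\overline{\psi_p}=\overline R^p$ of Proposition~\ref{prop:xp}: lifting $\overline R$ to $\tilde R\in\mathbb{Z}_{(p)}[A,B,x]$ and writing $\psi_p=\tilde R^p+pS$, one has $\bar Q_1=\bar R^{p-1}\bar R'+\bar S'$, and the job becomes pinning down the correction $\bar S'$ precisely enough to force the required vanishing.

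\medskip

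\noindent\emph{Step 3 (matching coefficients).} Substituting the expansion $\psi_p=\sum\alpha_{r,s}A^rB^sx^{(p^2-1)/2-2r-3s}$ into the identity of Step~2 and isolating the coefficient of $A^rB^sx^{(p^2-1)/2-2r-3s+1}$, only the triples $(r',s')\in\{(r,s),(r-1,s),(r,s-1)\}$ can contribute, since $f$ contains only the monomials $x^3,Ax,B$ and $f'$ only $x^2,A$. Routine bookkeeping with $t=(p^2-1)/2-2r-3s$ produces
\[ (4t^2+2t)\alpha_{r,s}+(4t^2+14t+12)\alpha_{r-1,s}+(4t^2+20t+24)\alpha_{r,s-1}\;\equiv\;0\pmod{p^2}. \]
Substituting $2t\equiv-(2d+1)\pmod{p^2}$ converts these three coefficients into $(2d)(2d+1)$, $(2d-2)(2d-3)$, and $(2d-3)(2d-5)$; dividing by the unit $4d(d+\tfrac{1}{2})$ (coprime to $p$ since $0<d<q<p/2$) reproduces the claimed recursion exactly.

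\medskip

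\noindent\emph{Main obstacle.} The crux is Step~2: modulo $p$ both sides of the identity are trivially zero, so the content sits at level $p^2$. Extracting this cancellation demands a careful Witt-style analysis of the lift $\tilde R$ and the correction $\bar S$, or alternatively an induction using the defining recursion $\psi_{2m+1}=\psi_{m+2}\psi_m^3-\psi_{m-1}\psi_{m+1}^3$ specialized at $m=q$; either way the $p$-adic bookkeeping will be delicate, and I expect it to be where the paper spends most of its effort.
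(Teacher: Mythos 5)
Your architecture is sound and your Step~3 bookkeeping is actually correct: the coefficient extraction from the identity $2f\psi_p''+f'\psi_p'\equiv 0\pmod{p^2}$ does reproduce exactly the mod-$p^2$ reduction of the recursion the paper uses (the paper simply quotes McKee's exact recursion, Eq.~(3) of [McKee], and reduces it modulo $p^2$, adding an induction to check that the $\alpha_{r,s}$ with $2r+3s<q$ are $p$-integral so that the error terms genuinely lie in $p^2\mathbb{Z}_{p\mathbb{Z}}$ --- a point your integral setup sidesteps). But Step~2, which you yourself flag as the crux, is not proved, and the one concrete manipulation you offer is circular: multiplying by $\psi_p'=pQ_1$ and recognizing $\partial_x\bigl(f(\psi_p')^2\bigr)$ only yields the tautology $Q_1(f'Q_1+2fQ_1')=f'Q_1^2+2fQ_1Q_1'$; you cannot ``cancel off $\psi_p'$'' to promote the $p$-divisibility you already have from Step~1 into the $p^2$-divisibility you need. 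The Witt-style analysis of the lift $\tilde R$ is left entirely open, so the lemma is not established.

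The missing ingredient is the classical second-order identity
\begin{equation*}
\psi_p\left(f\,\psi_p''+\tfrac{1}{2}f'\,\psi_p'\right)-f\,(\psi_p')^2 \;=\; p^2\,\psi_{p-1}\psi_{p+1},
\end{equation*}
which comes from $\frac{d^2}{dz^2}\log\bigl(\sigma(nz)/\sigma(z)^{n^2}\bigr)=n^2\bigl(\wp(z)-\wp(nz)\bigr)$ together with $x\circ[p]=\phi_p/\psi_p^2$, and which is precisely what McKee's recursion packages. Granting it, your Step~2 follows in two lines: Step~1 gives $\psi_p'\in p\,\mathbb{Z}[A,B,x]$, so the right-hand side plus $f(\psi_p')^2$ lies in $p^2\mathbb{Z}[A,B,x]$; writing $f\psi_p''+\frac12 f'\psi_p'=pG$, the identity gives $\overline{\psi_p}\cdot\overline{G}=0$ in the integral domain $\mathbb{F}_p[A,B,x]$, and since $\overline{\psi_p}\neq 0$ (ordinary specializations exist) we get $\overline{G}=0$, i.e.\ your congruence. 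Two smaller repairs: Proposition~\ref{prop:xp} is proved for finite (hence perfect) base fields, so you cannot apply it verbatim over the imperfect field $\mathbb{F}_p(A,B)$; since the $\alpha_{r,s}$ are integers, a single ordinary specialization already forces $p\mid t\alpha_{r,s}$ for all $t$ prime to $p$, which is what you actually need. And note that Step~1 as you state it only controls $\alpha_{r,s}$ for $0<2r+3s<q$, whereas $\psi_p'\in p\mathbb{Z}[A,B,x]$ requires the full statement that $\overline{\psi_p}$ is a polynomial in $x^p$.
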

\begin{proof}
By \cite[Eq. (3)]{McKee} we know that, for $d = 2r+3s$, $$\begin{aligned} d\left(d+\frac{1}{2}\right)\alpha_{r,s} & = \left(\frac{p^2+3}{2}-d\right)\left(\frac{p^2}{6}-1+d\right)\alpha_{r-1,s} \\ & \quad\quad\quad - \left(\frac{p^2+5}{2}-d\right)\left(\frac{p^2+3}{2}-d\right)\alpha_{r,s-1} \\ & \quad\quad\quad + 3(r+1)p^2\alpha_{r+1,s-1} - \frac{2}{3}(s+1)p^2\alpha_{r-2,s+1}.\end{aligned}$$Hence,
\begin{equation} d\left(d+\frac{1}{2}\right)\alpha_{r,s} = -(d-1)\left(d-\frac{3}{2}\right)\alpha_{r-1,s} - \left(d-\frac{3}{2}\right)\left(d-\frac{5}{2}\right)\alpha_{r,s-1} + p^2w,\label{eq:bah}\end{equation} where $w$ is an expression using $\frac{1}{2},\frac{1}{3}$ and $\alpha_{r',s'}$ with $2r'+3s' < d$. This yields a way to compute $\alpha_{r,s}$ by induction on $d$. To do this, we need to invert $d$ and $2d+1$. Now note that $d = 2r+3s$ is given to be in the set $\{1,2,\ldots,q-1\}$, so $p$ can not divide $d$ or $2d+1< 2q+1 = p$. So using equation (\ref{eq:bah}), and the specific form of $w$, it follows by induction that $\alpha_{r,s}\in \mathbb{Z}_{p\mathbb{Z}}$ for $0 < 2r+3s < q$ (in other words: we don't need to invert $p$ to compute these coefficients). Since $\alpha_{r,s} = 0$ for $2r+3s < 0$, $\alpha_{r,0} = \alpha_{0,s} = 0$ for $r$ and $s$ negative, and $\alpha_{0,0} = p$, we can even say that $\alpha_{r,s}\in \mathbb{Z}_{p\mathbb{Z}}$ for $2r+3s < q$.

Again, using equation (\ref{eq:bah}) and now using the fact that $\alpha_{r',s'}\in \mathbb{Z}_{p\mathbb{Z}}$ for $2r'+3s' < d < q$, we get $$d\left(d+\frac{1}{2}\right)\alpha_{r,s} \in -(d-1)\left(d-\frac{3}{2}\right)\alpha_{r-1,s} - \left(d-\frac{3}{2}\right)\left(d-\frac{5}{2}\right)\alpha_{r,s-1} + p^2\mathbb{Z}_{p\mathbb{Z}}.$$Now use the fact that $d\left(d+\frac{1}{2}\right)$ is not divisible by $p$ to conclude the proof.
\end{proof}

As we noted in the proof, we can use the formula given in the preceding lemma to compute $\alpha_{r,s}$ by induction. This is what we do in the next proposition, in which we solve the above recurrence mod $p$. This is the crux of the proof of the theorem.

\begin{proposition}
If $r$ and $s$ are non--negative integers such that $0 \leq 2r+3s < q$, then $$\alpha_{r,s} \in \left(\frac{-1}{4}\right)^{r+s}\frac{p}{4r+6s+1}\binom{2r+2s}{r+s,r,s} + p^2\mathbb{Z}_{p\mathbb{Z}}.$$
\end{proposition}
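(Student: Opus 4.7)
The plan is strong induction on the weighted degree $d = 2r+3s$. The base case $d = 0$ forces $r = s = 0$, and both sides of the congruence reduce to $p = \alpha_{0,0}$.

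For the inductive step, fix $(r,s)$ with $2r+3s = d$ and $0 < d < q$, and set $n = r+s$. Since $\alpha_{r-1,s}$ and $\alpha_{r,s-1}$ correspond to weighted degrees $d-2$ and $d-3$, both strictly smaller than $d$, the inductive hypothesis supplies explicit formulas for them modulo $p^2\mathbb{Z}_{p\mathbb{Z}}$. A crucial observation is that the denominators appearing in these formulas are
$$4(r-1)+6s+1 = 2d-3 \quad\text{and}\quad 4r+6(s-1)+1 = 2d-5,$$
which match (up to a factor of $2$) the numerator factors $d - 3/2$ and $d - 5/2$ already present in the recurrence of Lemma \ref{lemma:recursie}. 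After substituting, performing these cancellations, and dividing through by the common factor $p\cdot(-1/4)^{n-1}/(d(2d+1))$ (each component of which is a unit in $\mathbb{Z}_{p\mathbb{Z}}$ since $d < q$), the desired congruence reduces to the polynomial identity
$$d\binom{2n}{n,r,s} = 4(d-1)\binom{2n-2}{n-1,r-1,s} + 2(2d-3)\binom{2n-2}{n-1,r,s-1}.$$

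To dispatch this identity I would compute the ratios
$$\frac{\binom{2n-2}{n-1,r-1,s}}{\binom{2n}{n,r,s}} = \frac{r}{2(2n-1)}, \qquad \frac{\binom{2n-2}{n-1,r,s-1}}{\binom{2n}{n,r,s}} = \frac{s}{2(2n-1)},$$
which collapse the statement to the linear equation $d(2n-1) = 2(d-1)r + (2d-3)s$. Both sides expand to $4r^2 + 10rs + 6s^2 - 2r - 3s$ after substituting $d = 2r+3s$ and $n = r+s$, so the identity holds. The boundary cases $r = 0$ and $s = 0$ require no separate treatment, since the convention $\alpha_{-1,s} = \alpha_{r,-1} = 0$ is matched by the vanishing of the corresponding trinomial coefficient on the right--hand side of the reduced identity.

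The main obstacle I anticipate is purely clerical: tracking the three distinct denominators $2d+1$, $2d-3$, $2d-5$ and the powers of $-1/4$ through the substitution, and verifying that none of the intermediate quantities requires inverting $p$. The hypothesis $d < q$ (equivalent to $2d+1 < 2q+1 = p$) guarantees that each of these denominators is a unit in $\mathbb{Z}_{p\mathbb{Z}}$, so the mod $p^2$ bookkeeping goes through without incident, and the entire argument distills to the single quadratic identity above.
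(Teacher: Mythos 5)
Your proof is correct and follows essentially the same route as the paper: induction on the weighted degree $d = 2r+3s$ using the recurrence of Lemma~\ref{lemma:recursie}, with the inductive step reduced to an elementary multinomial identity (the paper's ``straight--forward computation''). The only, harmless, difference is that you fold the boundary cases $r=0$ and $s=0$ into the main induction via the convention that multinomial coefficients with a negative entry vanish, whereas the paper treats $\alpha_{r,0}$ and $\alpha_{0,s}$ separately with an explicit telescoping product.
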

\begin{proof}
We will prove this by induction on $d = 2r+3s$, using the formula from Lemma \ref{lemma:recursie}. All the equations below are modulo $p^2\mathbb{Z}_{p\mathbb{Z}}$. (One should be careful not to divide by a multiple of $p$.) In Lemma \ref{lemma:recursie}, we see that $\alpha_{r,s}$ (modulo $p^2\mathbb{Z}_{p\mathbb{Z}}$) is determined by $\alpha_{r-1,s}$ and $\alpha_{r,s-1}$, so the induction goes back to $d' = 2(r-1)+3s = d-2$ and $d'' = 2r+3(s-1) = d-3$. This means that we should have $d\geq 3$, $r\geq 1$ and $s\geq 1$ to use induction. So the first steps of the induction will have to compute $\alpha_{0,0},\alpha_{1,0},\alpha_{0,1}$ (i.e., $\alpha_{r,s}$ with $2r+3s\in\{0,1,2,3\}$), as well as $\alpha_{r,0}$ and $\alpha_{0,s}$ for all non--negative integers $r, s$.
\begin{itemize}
\item We can check the small values to be true, using Lemma \ref{lemma:recursie}. We find $\alpha_{0,0} = p$, $\alpha_{1,0} = -\frac{1}{10}p$ and $\alpha_{0,1} = -\frac{1}{14}p$, which is consistent with our formula.
\item By the recursion formula and $\alpha_{r,-1} = 0$, we know that for $0 < 2r < q$, we have $$\alpha_{r,0} = -\frac{(2r-1)(4r-3)}{2r(4r+1)}\alpha_{r-1,0}.$$Using this repeatedly, we get $$\begin{aligned} \alpha_{r,0} & = \left(\frac{-1}{2}\right)^r \frac{\left[(2r-1)(2r-3)\cdots 1\right]\cdot\left[(4r-3)(4r-7)\cdots 1\right]}{\left[r\cdot (r-1)\cdots 1\right]\cdot\left[(4r+1)(4r-3)\cdots 5\right]}\alpha_{0,0} \\ & = \left(\frac{-1}{2}\right)^r \frac{(2r-1)(2r-3)\cdots 1}{r! (4r+1)}p.\end{aligned}$$ Using the fact that $(2r)! = \left[1\cdot 3\cdots (2r-1)\right]\cdot 2^r\cdot r!$, we find that $$\alpha_{r,0} = \left(\frac{-1}{2}\right)^r \frac{(2r)!}{2^r\cdot (r!)^2 \cdot (4r+1)}p = \left(\frac{-1}{4}\right)^r\frac{p}{4r+1}\binom{2r}{r,r,0},$$which is consistent with our formula.
\item Proving $\alpha_{0,s} = \left(\frac{-1}{4}\right)^s \frac{p}{6s+1}\binom{2s}{s,s,0}$ can be done similarly.
\end{itemize}
So now assume that our equation is true for all $d = 0,1,\ldots,D$ with $D\geq 3$, and suppose $r,s\geq 1$ (because we know it is true for $r = 0$ or $s = 0$). Since $r-1,s-1\geq 0$ and the degrees $2r'+3s'$ in the recursion formula from Lemma \ref{lemma:recursie} are in the interval of the induction hypothesis, we get: $$\begin{aligned} \alpha_{r,s} & = -\frac{(2r+3s-1)(4r+6s-3)}{(2r+3s)(4r+6s+1)}\alpha_{r-1,s} - \frac{(4r+6s-3)(4r+6s-5)}{2(2r+3s)(4r+6s+1)}\alpha_{r,s-1} \\ & = -\frac{(2r+3s-1)(4r+6s-3)}{(2r+3s)(4r+6s+1)}\left(\frac{-1}{4}\right)^{r+s-1}\frac{p}{4r+6s-3}\binom{2r+2s-2}{r+s-1,r-1,s} \\ & \quad\quad- \frac{(4r+6s-3)(4r+6s-5)}{2(2r+3s)(4r+6s+1)}\left(\frac{-1}{4}\right)^{r+s-1}\frac{p}{4r+6s-5}\binom{2r+2s-2}{r+s-1,r,s-1},\end{aligned}$$which a straight--forward computation shows to be equal to $$\left(\displaystyle\frac{-1}{4}\right)^{r+s}\displaystyle\frac{p}{4r+6s+1}\displaystyle\binom{2r+2s}{r+s,r,s}.$$ This proves the proposition.

\end{proof}

Note that we only used $d < q$ when we were dividing by $d+\frac{1}{2}$: we need this not to be a multiple of $p$ as to keep the congruence modulo $p^2\mathbb{Z}_{p\mathbb{Z}}$ true. All the real calculations don't use this assumption $d<q$, so using the proposition we get the following extension:

\begin{proposition}
If $r$ and $s$ are non--negative integers such that $2r+3s = q$, then $$\alpha_{r,s} \in \left(\frac{-1}{4}\right)^{r+s}\binom{2r+2s}{r+s,r,s} + p\mathbb{Z}_{p\mathbb{Z}}.$$
\label{prop:explicitcoef}
\end{proposition}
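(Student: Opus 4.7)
The plan is to apply equation (\ref{eq:bah}) from the proof of Lemma \ref{lemma:recursie} once more, now at the critical value $d = q$. This is exactly where the earlier argument broke down: the factor $d+\tfrac{1}{2} = p/2$ on the left-hand side of (\ref{eq:bah}) is divisible by $p$, so we cannot turn (\ref{eq:bah}) into a clean recursion mod $p^2\mathbb{Z}_{p\mathbb{Z}}$. The key observation that rescues the argument is that every $\alpha_{r-1,s}$ and $\alpha_{r,s-1}$ on the right-hand side is, by the previous proposition (applicable since $2(r-1)+3s$ and $2r+3(s-1)$ are both strictly below $q$), already divisible by $p$. Hence one factor of $p$ cancels on both sides, yielding a well-defined congruence for $\alpha_{r,s}$ modulo $p$.

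Concretely, at $d=q$ equation (\ref{eq:bah}) reads
$$\tfrac{qp}{2}\,\alpha_{r,s} \equiv -(q-1)(q-\tfrac{3}{2})\,\alpha_{r-1,s} - (q-\tfrac{3}{2})(q-\tfrac{5}{2})\,\alpha_{r,s-1} \pmod{p^2\mathbb{Z}_{p\mathbb{Z}}}.$$
The denominators $4(r-1)+6s+1 = 2q-3$ and $4r+6(s-1)+1 = 2q-5$ appearing in the formulas for $\alpha_{r-1,s}$ and $\alpha_{r,s-1}$ cancel cleanly against the factors $(2q-3)$ present in the numerators; substituting, dividing through by $p$ and then by $q/2$ produces
$$\alpha_{r,s} \equiv \Bigl(\tfrac{-1}{4}\Bigr)^{r+s-1}\Bigl[-\tfrac{q-1}{q}\binom{2r+2s-2}{r+s-1,r-1,s} - \tfrac{2q-3}{2q}\binom{2r+2s-2}{r+s-1,r,s-1}\Bigr] \pmod{p}.$$
Using $2q+1 = p$, i.e.\ $q \equiv -1/2 \pmod p$, the two rational scalars reduce to $(q-1)/q \equiv 3$ and $(2q-3)/(2q) \equiv 4$. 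The boundary cases $r=0$ or $s=0$ are handled automatically, since $\alpha_{-1,s} = \alpha_{r,-1} = 0$ matches the convention that the corresponding trinomial coefficient with a $-1$ entry vanishes.

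All that remains is the purely combinatorial identity
$$12\binom{2r+2s-2}{r+s-1,r-1,s} + 16\binom{2r+2s-2}{r+s-1,r,s-1} \equiv \binom{2r+2s}{r+s,r,s} \pmod{p}.$$
Writing $N = r+s$ and comparing factorials gives $\binom{2N-2}{N-1,r-1,s} = \tfrac{r}{2(2N-1)}\binom{2N}{N,r,s}$ and $\binom{2N-2}{N-1,r,s-1} = \tfrac{s}{2(2N-1)}\binom{2N}{N,r,s}$, so the left-hand side collapses to $\tfrac{6r+8s}{2N-1}\binom{2N}{N,r,s}$. The congruence then amounts to $6r+8s \equiv 2N-1 \pmod p$, i.e.\ $p \mid 4r+6s+1$, and this holds identically because $4r+6s+1 = 2q+1 = p$ by hypothesis.

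The only real obstacle is conceptual rather than computational: recognising that at $d=q$ the factor of $p$ inside $d(d+\tfrac{1}{2})$ is exactly matched, on the other side, by the single factor of $p$ that the previous proposition places in every $\alpha_{r',s'}$, so we trade a mod-$p^2$ statement for a mod-$p$ one without losing the information we need. Once that is seen, the rest is a short binomial manipulation driven by the numerical coincidence $4r+6s+1 = p$.
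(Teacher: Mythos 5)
Your proposal is correct and follows essentially the same route as the paper: the paper's (very terse) proof of this proposition is precisely the observation that the induction step of the preceding proposition can be run once more at $d=q$, where dividing by $d+\tfrac{1}{2}=p/2$ costs one factor of $p$ in the modulus and makes the factor $p/(4r+6s+1)=1$ disappear. You have merely written out in full the ``straight--forward computation'' that the paper leaves implicit, including the correct treatment of the boundary cases $r=0$ or $s=0$.
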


Note that the factor $p/(4r+6s+1) = p/(2q+1) = 1$ disappeared, and that we only have a congruence modulo $p\mathbb{Z}_{p\mathbb{Z}}$. Also keep in mind that up until now, we were not working in positive characteristic: these formulas say something about the coefficients of $\psi_p(x,A,B)\in\mathbb{Z}[x,A,B]$. From Proposition \ref{prop:explicitcoef} we find $$\begin{aligned} \ell_p(A,B) & = \beta_{\frac{1}{2}p(p-1)}(A,B) = \sum_{2r+3s = q} \alpha_{r,s}A^rB^s \\ & \equiv \sum_{2r+3s = q} \left(\frac{-1}{4}\right)^{r+s}\binom{2r+2s}{r+s,r,s}A^rB^s\pmod{p}.\end{aligned}$$

\subsection{Step 3: equality of coefficients in the sums}

We have proven that $$c_p(A,B) = \sum_{j\in J} \binom{q}{\frac{1}{3}(2q-j),j,\frac{1}{3}(q-2j)}A^jB^{\frac{1}{3}(q-2j)},$$where $J = \left\{j\in\mathbb{Z}\mid j\equiv -q\pmod{3}, 0 \leq j \leq \frac{1}{2}q\right\}$, and $$\ell_p(A,B)\equiv \sum_{2r+3s = q} \left(\frac{-1}{4}\right)^{r+s}\binom{2r+2s}{r+s,r,s}A^rB^s\pmod{p}.$$Note that the indices in this last sum are all couples $(r,s)$ of non--negative integers such that $2r+3s = q$. This condition is equivalent to $r$ and $s = \frac{1}{3}(q-2r)$ being non--negative integers, i.e., $0\leq r\leqslant \frac{1}{2}q$ and $r\equiv -q\pmod{3}$. (For these $r$ and $s$ we have $r+s = \frac{1}{3}(q+r)$.) It follows that $$\ell_p(A,B) \equiv \sum_{j\in J}\left(\frac{-1}{4}\right)^{\frac{1}{3}(q+j)}\binom{\frac{2}{3}(q+j)}{\frac{1}{3}(q+j),j,\frac{1}{3}(q-2j)}A^jB^{\frac{1}{3}(q-2j)}\pmod{p}.$$
Therefore, $c_p(A,B) \equiv \ell_p(A,B)\pmod{p}$ is equivalent to proving $$\binom{q}{\frac{1}{3}(2q-j),j,\frac{1}{3}(q-2j)} \equiv \left(\frac{-1}{4}\right)^{\frac{1}{3}(q+j)} \binom{\frac{2}{3}(q+j)}{\frac{1}{3}(q+j),j,\frac{1}{3}(q-2j)}\pmod{p}$$for all $j\in J$. To prove this, put $j+q = 3k$ with $k\in\mathbb{Z}$ (then $\frac{1}{3}q \leq k \leq \frac{1}{2}q$) and rewrite the congruence as $$\binom{q}{q-k,j,q-2k} \equiv \left(\frac{-1}{4}\right)^k \binom{2k}{k,j,q-2k}\pmod{p}.$$This is equivalent to $$\frac{q!}{(q-k)!} \equiv \left(\frac{-1}{4}\right)^k\frac{(2k)!}{k!}\pmod{p}.$$We rewrite the left hand side as follows: $$\begin{aligned} \frac{q!}{(q-k)!} & = q(q-1)\cdots(q-k+1) = \left(\frac{p-1}{2}\right)\left(\frac{p-3}{2}\right)\cdots \left(\frac{p+1-2k}{2}\right) \\ & \equiv 2^{-k}\cdot (-1)(-3)\cdots(-2k+1) = (-2)^{-k} 1\cdot 3\cdots (2k-1) \\ & = (-2)^{-k}\frac{(2k)!}{2\cdot 4\cdots (2k)} = (-2)^{-k}\frac{(2k)!}{2^k\cdot k!}\pmod{p},\end{aligned}$$which is the desired congruence. This completes the proof of the theorem.

\section{A special curve}

Let $p$ be a prime congruent to 1 modulo 4 and consider the elliptic curve $y^2 = x^3 + x$ over the finite field $\mathbb{F}_p$. Write $p = 4k+1$ with $k\in\mathbb{N}$. Then $c_p(1,0)$ is the coefficient at $x^{p-1} = x^{4k}$ in $(x^3+x)^{2k} = x^{2k}\left(x^2+1\right)^{2k}$, which is clearly $\binom{2k}{k}$. On the other hand, $$\ell_p(1,0) \equiv \sum_{2r+3s = 2k}\left(\frac{-1}{4}\right)^{r+s}\binom{2r+2s}{r+s,r,s}1^r0^s\pmod{p},$$which reduces to $\ell_p(1,0)\equiv \left(\frac{-1}{4}\right)^k\binom{2k}{k,k,0} \equiv (-4)^{-k}\binom{2k}{k}\pmod{p}$. The theorem states that $c_p(1,0) \equiv \ell_p(1,0)\pmod{p}$, which in this case implies that $(-4)^{-k}\equiv 1\pmod{p}$. Using $(-4)^{-1} \equiv k\pmod{p}$ we get

\begin{proposition}
Let $k$ be a positive integer. If $4k+1$ is prime, then it divides $k^k-1$.
\end{proposition}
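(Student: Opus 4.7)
The plan is to specialise the main theorem to $(A,B) = (1,0)$, i.e.\ to the elliptic curve $y^2 = x^3+x$, and extract an arithmetic statement about $k$ by cancelling an explicit combinatorial factor. Throughout, write $p = 4k+1$ and $q = (p-1)/2 = 2k$.

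First I would re-derive the two explicit evaluations that appear just above the proposition. For $c_p(1,0)$, the coefficient of $x^{p-1}$ in $(x^3+x)^{2k} = x^{2k}(x^2+1)^{2k}$ is the coefficient of $x^{2k}$ in $(x^2+1)^{2k}$, namely $\binom{2k}{k}$. For $\ell_p(1,0)$, setting $B=0$ kills every term in the sum of Proposition~\ref{prop:explicitcoef} except the one with $s=0$, which forces $r=k$; that surviving term equals $(-4)^{-k}\binom{2k}{k}$ modulo $p$. The main theorem then yields
\[
\binom{2k}{k} \;\equiv\; (-4)^{-k}\binom{2k}{k} \pmod p.
\]

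Next I would cancel the factor $\binom{2k}{k}$ from both sides. Because $p = 4k+1 > 2k$, the prime $p$ divides neither $(2k)!$ nor $k!$, so $\binom{2k}{k}$ is a unit in $\mathbb{F}_p$; this is the one small point the excerpt glosses over, but it is immediate. The congruence then simplifies to $(-4)^k \equiv 1 \pmod p$. Finally, since $-4k \equiv -4k + (4k+1) = 1 \pmod p$, we have $(-4)^{-1} \equiv k \pmod p$, whence $k^k \equiv (-4)^{-k} \equiv 1 \pmod p$, i.e.\ $p \mid k^k - 1$. There is no real obstacle; the only delicate-looking step is the cancellation of $\binom{2k}{k}$, and the inequality $p > 2k$ dispatches it instantly.
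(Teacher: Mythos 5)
Your proposal is correct and follows the paper's own (first) proof essentially verbatim: specialize the main theorem at $(A,B)=(1,0)$, evaluate $c_p(1,0)=\binom{2k}{k}$ and $\ell_p(1,0)\equiv(-4)^{-k}\binom{2k}{k}\pmod{p}$, cancel $\binom{2k}{k}$, and substitute $(-4)^{-1}\equiv k\pmod{p}$. Your added remark that $\binom{2k}{k}$ is a unit modulo $p$ because $p=4k+1>2k$ is a correct justification of a cancellation the paper performs silently; the paper also offers a second, purely elementary proof via the quadratic character of $2$ modulo $p$, which your write-up does not touch.
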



\noindent \emph{Alternative proof.} Let $p = 4k+1$ be prime. Then 2 is a quadratic residue mod $p$ if and only if $k$ is even, so $(2/p) = 1$ if $k$ is even and $(2/p) = -1$ if $k$ is odd. It follows that $$(-1)^k = \left(\frac{2}{p}\right) \equiv 2^{\frac{p-1}{2}} \equiv 2^{2k} \equiv 4^k\pmod{p},$$so $k^k \equiv (-4k)^k \equiv (1-p)^k \equiv 1\pmod{p}$, as desired. $\qed$

\section*{Acknowledgements}
I would like to thank Antonella Perucca for assisting me when writing my master's thesis, which resulted in this paper.


\begin{thebibliography}{99}

  \bibitem[Cass]{Cass} \textsc{J. W. S. Cassels}. \emph{A note on the division values of $\wp(u)$.} Mathematical Proceedings of the Cambridge Philosophical Society 45:167--172 (1949).

  \bibitem[ChHa]{ChHa} \textsc{J. Cheon} and \textsc{S. Hahn}. \emph{Division polynomials of elliptic curves over finite fields.} Proc. Japan Acad. Ser. A Math. Sci. 72, 10:226-227 (1996).

  \bibitem[Deur]{Deur} \textsc{M. Deuring}. \emph{Die Typen der Multiplikatorringe Elliptischer Funktionenk\"orper.} Abh. Math., Sem. Univ. Hamburg 14:197-272 (1941).

  \bibitem[Enge]{Enge} \textsc{A. Enge}. \emph{Elliptic curves and their applications to cryptography: An introduction}. Kluwer Academic Publishers (1999).

  \bibitem[Gunji]{Gunji} \textsc{H. Gunji}. \emph{The Hasse invariant and $p$--division points of an elliptic curve}. Arch. Math. 27:148-158 (1976).

  
  \bibitem[McKee]{McKee} \textsc{J. McKee}. \emph{Computing division polynomials.} J. Math. Comp., 63:767--771 (1994).

  \bibitem[Silv]{Silv} \textsc{J. H. Silverman}. \emph{The Arithmetic of Elliptic Curves}, volume 106 of \emph{Graduate Texts in Mathematics}. Springer--Verlag, New York, second edition (2009).
\end{thebibliography}
\end{document}